\documentclass[a4paper, reqno, 12pt]{amsart}
\usepackage{stmaryrd}
\usepackage{float}

\usepackage[usenames,dvipsnames]{color}
\usepackage{amsthm,amsfonts,amssymb,amsmath,amsxtra}
\usepackage{tikz}
\usepackage{tkz-euclide}
\usepackage[all]{xy}
\SelectTips{cm}{}
\usepackage{xr-hyper}
\usepackage[colorlinks=
citecolor=Black,
linkcolor=Red,
urlcolor=Blue]{hyperref}
\usepackage{verbatim}

\usepackage{geometry}
\usepackage{mathrsfs}

\RequirePackage{xspace}
% load etoolbox package, for programming features
\RequirePackage{etoolbox}
% load varwidth package, for text environments which are automatically the natural width of the text they contain
\RequirePackage{varwidth}
% load enumitem package, for easy margin adjustment in enumerate and itemize environments
\RequirePackage{enumitem}
% load tensor package, for good placement of super/subscripts to the left of symbols
\RequirePackage{tensor}
% load mathtools package, for various extensions of amsmath
\RequirePackage{mathtools}
% load longtable package, which allows tables to (if needed) split over multiple pages
\RequirePackage{longtable}
% load multirow package, which allows cells spanning multiple rows in tables
\RequirePackage{multirow}

\mathtoolsset{showonlyrefs}

% put sections only (as opposed to subsections) in the table of contents
\setcounter{tocdepth}{1}

\def\le{\leqslant}

\def\d{\delta}

\def\i{^{-1}}

\def\<{\langle}
\def\>{\rangle}

\newcommand{\BF}{\ensuremath{\mathbb {F}}\xspace}
\newcommand{{\BG}}{\ensuremath{\mathbb {G}}\xspace}

\newcommand{{\BK}}{\ensuremath{\mathbb {K}}\xspace}

\newcommand{\BN}{\ensuremath{\mathbb {N}}\xspace}

\newcommand{\BQ}{\ensuremath{\mathbb {Q}}\xspace}

\newcommand{\BS}{\ensuremath{\mathbb {S}}\xspace}

\newcommand{\BZ}{\ensuremath{\mathbb {Z}}\xspace}

\newcommand{\CH}{\ensuremath{\mathcal {H}}\xspace}

\newcommand{\CY}{\ensuremath{\mathcal {Y}}\xspace}

\DeclareMathOperator{\charac}{char}

\DeclareMathOperator{\Spec}{Spec}

%%% some additional macros

\DeclareMathOperator{\supp}{supp}
% Equation  \AMSname
% Theorem   \theoremname

% Theorem environments.
%
\newtheorem{theorem}{Theorem}
\newtheorem{proposition}[theorem]{Proposition}
\newtheorem{lemma}[theorem]{Lemma}

\theoremstyle{definition}
\newtheorem{definition}[theorem]{Definition}

\newtheorem{remark}[theorem]{Remark}

%The following is to allow the insertion of queries and comments for the author. 
\newtheoremstyle{query}%
{}{}%space above/below
{\color{red}}%body style
{}%heading indent
{\sffamily\bfseries}{:}{12pt}%heading style/punctuation/space after
{}% head spec
\theoremstyle{query}
\newtheorem{aq}{Author Query/Comment}

%This has been added so that pieces of text to be queried can  be indicated in red.

\newcommand{\baq}{\begin{aq}}%This just makes things easier
\newcommand{\eaq}{\end{aq}}

%This must come in the preamble AFTER any other theorem declarations.

\numberwithin{equation}{section}
\numberwithin{theorem}{section}

\usepackage{lineno}
%\linenumbers

%%%% macros added by Brian
%%%% many of these require the etoolbox package, which should be loaded above

% gets rid of indentation in itemize and enumerate enivronments, and adds
% a small space between list items:
\setitemize[0]{leftmargin=*,itemsep=\the\smallskipamount}
\setenumerate[0]{leftmargin=*,itemsep=\the\smallskipamount}

% basic right arrow, short in inlines and long in displays
\renewcommand{\to}{%
   \ifbool{@display}{\longrightarrow}{\rightarrow}%
   }
% redefine \mapsto to be short in inlines and long in displays
\let\shortmapsto\mapsto
\renewcommand{\mapsto}{%
   \ifbool{@display}{\longmapsto}{\shortmapsto}%
   }
% stretchable labeled right (2nd is xy-style) & left arrows, well-behaved inline or displayed
\newlength{\olen}
\newlength{\ulen}
\newlength{\xlen}
\newcommand{\xra}[2][]{%
   \ifbool{@display}%
      {\settowidth{\olen}{$\overset{#2}{\longrightarrow}$}%
       \settowidth{\ulen}{$\underset{#1}{\longrightarrow}$}%
       \settowidth{\xlen}{$\xrightarrow[#1]{#2}$}%
       \ifdimgreater{\olen}{\xlen}%
          {\underset{#1}{\overset{#2}{\longrightarrow}}}%
          {\ifdimgreater{\ulen}{\xlen}%
             {\underset{#1}{\overset{#2}{\longrightarrow}}}
             {\xrightarrow[#1]{#2}}}}%
      {\xrightarrow[#1]{#2}}
   }
\makeatother
\newcommand{\xyra}[2][]{%
   \settowidth{\xlen}{$\xrightarrow[#1]{#2}$}%
   \ifbool{@display}%
      {\settowidth{\olen}{$\overset{#2}{\longrightarrow}$}%
       \settowidth{\ulen}{$\underset{#1}{\longrightarrow}$}%
       \ifdimgreater{\olen}{\xlen}%
          {\mathrel{\xymatrix@M=.12ex@C=3.2ex{\ar[r]^-{#2}_-{#1} &}}}%
          {\ifdimgreater{\ulen}{\xlen}%
             {\mathrel{\xymatrix@M=.12ex@C=3.2ex{\ar[r]^-{#2}_-{#1} &}}}
             {\mathrel{\xymatrix@M=.12ex@C=\the\xlen{\ar[r]^-{#2}_-{#1} &}}}}}%
      {\mathrel{\xymatrix@M=.12ex@C=\the\xlen{\ar[r]^-{#2}_-{#1} &}}}%
   }
\makeatletter
\newcommand{\xla}[2][]{%
   \ifbool{@display}%
      {\settowidth{\olen}{$\overset{#2}{\longleftarrow}$}%
       \settowidth{\ulen}{$\underset{#1}{\longleftarrow}$}%
       \settowidth{\xlen}{$\xleftarrow[#1]{#2}$}%
       \ifdimgreater{\olen}{\xlen}%
          {\underset{#1}{\overset{#2}{\longleftarrow}}}%
          {\ifdimgreater{\ulen}{\xlen}%
             {\underset{#1}{\overset{#2}{\longleftarrow}}}
             {\xleftarrow[#1]{#2}}}}%
      {\xleftarrow[#1]{#2}}
   }
% isomorphism arrow, short in inlines and long in displays
\newcommand{\isoarrow}{%
   \ifbool{@display}{\overset{\sim}{\longrightarrow}}{\xrightarrow\sim}%
   }

\newcommand{\sfK}{\ensuremath{\mathsf{K}}\xspace}
\newcommand{\sfk}{\ensuremath{\mathsf{k}}\xspace}

\begin{document}

\title{Lusztig varieties for regular elements}
\author{Xuhua He}
\address{Department of Mathematics and New Cornerstone Science Laboratory, The University of Hong Kong, Pokfulam, Hong Kong, Hong Kong SAR, China}
\email{xuhuahe@hku.hk}
\author{Ruben La}
\address{Department of Mathematics and New Cornerstone Science Laboratory, The University of Hong Kong, Pokfulam, Hong Kong, Hong Kong SAR, China}
\email{rubenla@hku.hk}
\keywords{Reductive group, Weyl group, Bruhat decomposition, regular conjugacy class}
\subjclass[2010]{20G07, 20F55, 20E45, 20C08}

\begin{abstract}
Let $G$ be a connected reductive group over an algebraically closed field. Let $B$ be a Borel subgroup of $G$ and $W$ be the associated Weyl group. We show that for any $w \in W$ that is not contained in any standard parabolic subgroup of $W$, the intersection of the Bruhat cell $B w B$ with any regular conjugacy class of $G$ is always irreducible. We then prove that the associated Lusztig varieties are irreducible. This extends the previous work of Kim \cite{kim2020homology} on the regular semisimple and regular unipotent elements. The irreducibilitiy result of Lusztig varieties will be used in an upcoming work in the study of affine Lusztig varieties. 

%Given $\sfK$ an algebraically closed field and a reductive algebraic group $\mathbb{G}$ defined over $\sfK$ with a Dynkin diagram automorphism $\delta$ acting on $\mathbb{G}$, one can associate to each element $w$ of the Weyl group $W$ of $\mathbb{G}$ a Lusztig variety $\mathcal{Y}_{w,h,\delta}$. 
%When $\sfK$ is of characteristic 0 or $\bar{\mathbb{F}}_p$, 
%We show that $\mathcal{Y}_{w,h,\delta}$ is irreducible if $w$ is not contained in any $\delta$-stable parabolic subgroup of $W$.
%the union over $i \in \mathbb{N}$ of the set of simple reflections of a reduced expression of $\delta^i(w)$ equals $W$.
\end{abstract}
\maketitle

\tableofcontents

\section{Introduction}

Let $\sfK$ be an algebraically closed field and $\BG$ be a connected reductive group defined over $\sfK$. Let $G = \BG(\sfK)$ be the group of $\sfK$-points. Let $\d$ be an automorphism of the Dynkin diagram of $\BG$. Let $B$ be a $\d$-stable Borel subgroup and $T \subseteq B$ be a $\d$-stable maximal torus of $G$. Let $W=N_G(T)/T$ be the associated Weyl group and $\BS$ be the set of simple reflections determined by $B$. For any $w \in W$, we fix a representative $\dot w \in N_G(T)$. 
For $J \subseteq \BS$, let $W_J \subseteq W$ be the parabolic subgroup generated by the simple reflections $\{s_i\colon i \in J\}$. For $w \in W$, let $\text{supp}(w)$ be the subset of $\BS$ consisting of all simple reflections occurring in some reduced expression of $w$ and let $\text{supp}_\delta(w) = \bigcup_{i\in\BN} \text{supp}(\delta^i(w))$. Note that $\text{supp}_\d(w)$ is the smallest $\d$-stable subset $J$ of $\BS$ such that $w \in W_J$. 

For $g,h \in G$, consider the $\d$-conjugation action defined by $g\cdot_\delta h = gh\delta(g^{-1})$. The $\d$-centralizer of $h \in G$ is defined to be $Z_{G, \d}(h)=\{g \in G\colon g \cdot_\d h=h\}$. The Lusztig variety associated with $w \in W$ and $h \in G$ is defined as 
$$\CY_{w,h,\d}^G=\CY_{w, h, \d}=\{g B \in G/B\colon g \i h \d(g) \in B \dot w B\}.$$
The isomorphism class of a Lusztig variety does not depend on the choice of $\delta$-invariant Borel subgroup $B$, and further only depends on the $\delta$-conjugacy class of $h$. 

Lusztig varieties were originally introduced in \cite[\S1]{lusztig197912} for the case where $\delta = 1$ and $h$ is a regular semisimple element. For $h$ an arbitrary element in $G$, they were introduced in \cite{lusztig1985character1}, where they played an important role in the definition of character sheaves. 

An element $h \in G$ is called {\it $\delta$-regular} if $Z_{G, \d}(h)$ is of minimal dimension among all the elements $h \in G$. In this case, $\dim Z_{G, \d}(h)= \dim T^\delta$. We call the $\d$-conjugacy class $G \cdot_\d h$ {\it regular} if it contains a $\d$-regular element. 

The main result of this paper is the following. 
\begin{theorem}
%\label{theorem:lusztigvarietyirreduciblearbitrarycharacteristic}
    %Suppose that $\sfK = \bar\BF_p$ or that $\sfK$ is any algebraically closed field of characteristic 0. 
    Suppose $\supp_\delta(w) =\BS$ and let $h$ be a $\d$-regular element in $G$. Then $\CY_{w, h, \d}$ is irreducible.
\end{theorem}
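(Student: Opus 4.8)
The plan is to reduce the irreducibility of $\CY_{w,h,\d}$ for a general $\d$-regular $h$ to two more tractable cases and an induction. First I would set up the basic geometry: the map $\pi\colon \CY_{w,h,\d}\to \BS$-strata is controlled by the Bruhat decomposition, and there is a natural morphism relating $\CY_{w,h,\d}$ to the intersection $(BwB)\cap (G\cdot_\d h)$ via $gB\mapsto g\i h\d(g)$. Concretely, $\CY_{w,h,\d}$ fibers over the variety $\{x\in BwB\colon x\in G\cdot_\d h\}$ with fibers torsors under $Z_{G,\d}(h)$ (a $\d$-centralizer of minimal dimension $\dim T^\d$, which is connected when $h$ is regular semisimple, and in general has unipotent radical times a torus), so irreducibility of $\CY_{w,h,\d}$ is equivalent to irreducibility of $(BwB)\cap(G\cdot_\d h)$ — this is the ``intersection'' statement advertised in the abstract, and I would prove that first.

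The key structural input is the following dichotomy for the $\d$-regular class. Writing $h = su$ (or a $\d$-analogue: a commuting product of a $\d$-semisimple part and a unipotent part with respect to the twisted action), one has $Z_{G,\d}(s)^\circ =: M$ a $\d$-stable connected reductive subgroup containing a maximal torus, with $h$ mapping to a $\d$-regular \emph{unipotent-type} element of $M$. So the two extreme cases to handle directly are: (1) $h$ is $\d$-regular semisimple, and (2) $h$ is $\d$-regular ``unipotent'' (i.e. $Z_{G,\d}(h)$ has the same dimension but $h$ is unipotent up to the twist). Case (1) I would get from Kim \cite{kim2020homology} — or reprove it — using that $(BwB)\cap(\text{reg.\ ss.\ class})$ is, after the standard identification with a braid/Bruhat-type variety, a twisted conjugation orbit description: the fibers over the big cell are irreducible of the expected dimension, and the $\supp_\d(w)=\BS$ hypothesis is exactly what forces connectedness, via the fact that $w$ is not contained in any proper \emph{standard} parabolic and so the relevant ``Deligne--Lusztig''-style variety does not split along a Levi. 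Case (2) is the genuinely new ingredient: here I would use a Springer-type fibration / a resolution by iterated $\BP^1$-bundles indexed by a reduced expression of $w$, à la Bott--Samelson, and track irreducibility along the tower; the condition $\supp_\d(w)=\BS$ guarantees that every simple reflection appears, which prevents the total space from disconnecting into pieces governed by a proper parabolic.

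To pass from these two cases to a general $\d$-regular $h = su$, I would run the parabolic induction: the intersection $(BwB)\cap(G\cdot_\d h)$ is studied by first intersecting with the $\d$-centralizer group $M$ of the semisimple part (using $\supp_\d$ and the minimal-length / $P$-sigma-conjugacy reductions that are standard in this circle of ideas — e.g. the ``$P$-alcove'' / Deligne--Lusztig reduction method), reducing $w$ to a Weyl-group element $w'$ of $M$ with $\supp_{\d'}(w') = \BS_M$, and then applying case (2) inside $M$. One must check that the reduction steps — conjugating $w$ within its $\d$-conjugacy class by simple reflections, and the resulting fibrations $\CY_{sws',\dots}\to\CY_{w,\dots}$ — are compatible with irreducibility (each step is either an isomorphism or an affine/projective-line bundle, hence preserves irreducibility) and that they terminate at a configuration sitting inside $M$. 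The main obstacle I anticipate is precisely verifying that after all reductions the ``support'' hypothesis survives in the Levi $M$: a priori $w'$ could land in a proper standard parabolic of $W_M$ even though $w$ did not in $W$, and ruling this out — likely by a careful analysis of how $\supp_\d(w)=\BS$ interacts with the Kottwitz-type decomposition of the $\d$-conjugacy class of $w$ and with the structure of $M$ — is where the real work lies. Everything else (the fibration lemmas, dimension count $\dim \CY_{w,h,\d} = \ell(w) - (\dim T - \dim T^\d)$ or similar, and the base cases) is comparatively routine given the machinery already in the literature.
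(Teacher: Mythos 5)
Your opening reduction contains the central gap. You assert that $\CY_{w,h,\d}$ is a $Z_{G,\d}(h)$-torsor over $C\cap B\dot wB$ (where $C=G\cdot_\d h$) and hence that irreducibility of the two is \emph{equivalent}. Only one direction is true: irreducibility of $\CY_{w,h,\d}$ implies that of $C\cap B\dot wB$, because the latter is the image of the former under the fibration $g\mapsto g\cdot_\d h$. The converse fails precisely because $Z_{G,\d}(h)$ can be disconnected -- this happens already for regular semisimple elements in non-simply-connected groups (e.g.\ the class of $\mathrm{diag}(1,-1)$ in $\PGL_2$, whose centralizer contains the Weyl reflection), so your parenthetical claim of connectedness in the semisimple case is also false. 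When the fibers are disconnected, the preimage of an irreducible base can be reducible. This is exactly the difficulty the paper spends its final section on: it first proves $C\cap B\dot wB$ irreducible, then passes to a finite field, lists the $G^F$-orbits $h_1,\dots,h_k$ in $C^F$ (indexed by $F$-conjugacy classes in $Z_{G,\d}(h)/Z_{G,\d}(h)^\circ$), and shows via a Kawanaka-type point count that $\sum_i a_ib_i=1=\sum_i a_i$ with $a_i>0$ and $b_i\in\BN$ the number of top-dimensional components of $\CY_{w,h_i,\d}$, forcing every $b_i=1$; the statement over general fields then follows by spreading out, constructibility of $R^{2\ell(w)}\pi_!\underline{\BQ}_\ell$, and Bertini--Noether. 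Without some substitute for this step your argument does not prove the theorem even granting irreducibility of $C\cap B\dot wB$.

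Your strategy for $C\cap B\dot wB$ itself also diverges from the paper and has problems you partly acknowledge. The support hypothesis is not stable under conjugation of $w$ (in $S_3$, $s_1(s_1s_2s_1)s_1=s_2$, so $\supp(w_0)=\BS$ degenerates to a single simple reflection), so the Deligne--Lusztig-style reduction by simple reflections cannot be iterated while keeping $\supp_\d=\BS$; moreover those reductions decompose $\CY_w$ into \emph{two} pieces fibering over $\CY_{sws}$ and $\CY_{sw}$, which does not preserve irreducibility in either direction. The Jordan-decomposition induction into $M=Z_G(s)^\circ$ faces the further obstacle that for $w$ of full support the intersection $B\dot wB\cap C$ does not localize to $M$. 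The paper avoids all of this with a single uniform argument valid for every regular $\d$-class: establish equidimensionality of $C\cap B\dot wB$ by an upper bound (image of $\CY_{w,h,\d}$) matching the generic lower bound for intersections; decompose $C=\bigcup_{w'}(B\dot w'B^-)\cdot_\d h$ using Steinberg's theorem; show the $w'\ne 1$ pieces have strictly smaller dimension via the Hecke-algebra estimate that $[T_wT_{\delta(w')^{-1}w_0}:T_{(w')^{-1}w_0}]$ has degree $<\ell(w)$ for $w'\ne1$ (Kawanaka plus Digne--Michel, proved by counting $\BF_q$-points of $U^-$-translates); and show the $w'=1$ piece sits inside the irreducible variety $U\cdot_\d(tT_1(U^-\cap B\dot wB))$ of full dimension. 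You would need to either adopt something like this or repair both the torsor equivalence and the support-preservation issues before your outline could become a proof.
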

For $\delta = 1$, the case where $h$ is regular semisimple was settled by Kim, who gave a formula for the number of irreducible components in \cite[Theorem 4.5]{kim2020homology}. The case that $h$ is unipotent and regular, with the additional assumption that the characteristic of $\sfK$ is good for $\BG$, was also settled by Kim in \cite[Proposition 5.6]{kim2020homology}, by showing that the Lusztig variety is always irreducible for any choice of $w \in W$.

\subsection{Acknowledgements}
The authors would like to thank Cheng-Chiang Tsai for carefully explaining \cite[Appendix B]{tsai2020components} and the reduction to the closure of the finite field case in Theorem \ref{theorem:lusztigvarietyirreduciblearbitrarycharacteristic}. We also thank George Lusztig and Kenneth Chiu for helpful discussions. XH is partially supported by the New Cornerstone Science Foundation through the New Cornerstone Investigator Program and the Xplorer Prize, as well as by the Hong Kong RGC grant 14300122. %This correspondence took place between Cheng-Chiang Tsai and the second author at the NCTS Workshop on Representation Theory and Lie Groups.  %RL thanks NCTS for funding his stay for this conference. 

%\section{General setup}\label{sec:generalsetup}
\section{Pure dimensionality}\label{sec:Pure Dimensionality}

%We first show that the Lusztig varieties associated with regular elements are of pure dimension. 

We first prove equidimensionality.

\begin{lemma}\label{lemma:Lusztigvarietypuredimensionsemisimple}
Let $C$ be a regular $\d$-conjugacy class of $G$. For any $w \in W$, we have 

\begin{enumerate}
    \item $C \cap B \dot w B$ is of pure dimension $\dim B + \ell(w) - \dim T^\d$,
    \item for any $h \in C$, $\CY_{w, h, \d}$ is of pure dimension $\ell(w)$.
\end{enumerate}
\end{lemma}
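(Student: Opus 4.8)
The plan is to reduce the two assertions to a single one, to get a lower bound on the dimensions of all components by a soft argument, and then to match it with an upper bound (and pure-dimensionality) by a Deligne--Lusztig type fibration argument. First, for the reduction, I would introduce the incidence variety
$$\mathcal Z := \{(g, xB) \in C \times (G/B) : x^{-1} g \delta(x) \in B \dot w B\}$$
with its two projections. The fiber of $\mathcal Z \to G/B$ over $xB$ is $C \cap x(B\dot wB)\delta(x)^{-1}$; since $C$ is $\delta$-stable, the automorphism $y \mapsto x y \delta(x)^{-1}$ of $G$ preserves $C$, so this fiber is isomorphic to $C \cap B\dot wB$, and using the sections of $G \to G/B$ over Bruhat cells one sees that $\mathcal Z \to G/B$ is a Zariski-locally trivial bundle with fiber $C \cap B\dot wB$. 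The fiber of $\mathcal Z \to C$ over $g$ is $\CY_{w,g,\delta}$, and left translation by a $\delta$-conjugating element identifies all of these with $\CY_{w,h,\delta}$; being $G$-equivariant over the single orbit $C$, the map $\mathcal Z \to C$ is a fiber bundle with this fiber. Matching the irreducible components of $\mathcal Z$ through the two projections and using $\dim C = \dim G - \dim T^\delta$ and $\dim G/B = \dim G - \dim B$, one finds that each component of $C \cap B\dot wB$ has dimension equal to $\dim B - \dim T^\delta$ plus the dimension of the corresponding component of $\CY_{w,h,\delta}$. So (1) and (2) are equivalent, and it suffices to prove (2).

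For the lower bound, let $\chi \colon G \to G/\!\!/_\delta G$ be the GIT quotient for the $\delta$-conjugation action. By the ($\delta$-twisted) Steinberg quotient theorem, $\chi$ has irreducible fibers, its target has dimension $\dim T^\delta$, and $\overline C$ coincides with the fiber $\chi^{-1}(\chi(h))$. Since $B\dot wB$ is irreducible of dimension $\dim B + \ell(w)$, the fiber-dimension inequality applied to $\chi|_{B\dot wB}$ shows that every irreducible component of $\overline C \cap B\dot wB$, hence of its open subset $C \cap B\dot wB$, has dimension at least $\dim B + \ell(w) - \dim T^\delta$; via the previous paragraph, every irreducible component of $\CY_{w,h,\delta}$ then has dimension at least $\ell(w)$.

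It remains to prove $\dim \CY_{w,h,\delta} \le \ell(w)$ and pure-dimensionality, which forces pure dimension $\ell(w)$ and, with the first paragraph, also gives (1). I would argue by induction on $\ell(w)$, adapting the fibration technique of Deligne and Lusztig. Using the ``unique gallery'' description, a reduced word for $w$ realizes $\CY_{w,h,\delta}$ as the locus inside a tower of $\mathbb P^1$-bundles over $G/B$ on which the first and last flags are related by $xB \mapsto h\delta(x)B$; peeling off a left descent $s$ of $w$ via the projection $G/B \to G/P_s$ (with $P_s = B \cup B\dot sB$) produces a morphism from $\CY_{w,h,\delta}$ onto a Lusztig-type variety whose length parameter is smaller by $1$ (or $2$), each of whose fibers is an affine line, a punctured affine line, or a single point. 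The essential input is that $h$ is $\delta$-regular: this makes the relevant fixed-point loci in partial flag varieties finite, which keeps the fibers from degenerating to a whole $\mathbb P^1$ and makes the morphism equidimensional with one-dimensional fibers. Induction then gives that $\CY_{w,h,\delta}$ is (smooth and) pure of dimension $\ell(w)$; the base case $\ell(w) = 0$ is the finiteness of $\CY_{1,h,\delta} = \{xB : x^{-1}h\delta(x) \in B\}$ for $\delta$-regular $h$, which follows from the $\delta$-Jordan decomposition (a $\delta$-regular element lies in only finitely many $\delta$-stable Borel subgroups).

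The main obstacle is this last step. Unlike in the Deligne--Lusztig setting, the twist is here by a fixed element $h$ rather than by a group automorphism, so there is no induced twisted conjugation action of $\delta$ on $W$ to organise the recursion, and the fibers of the naive one-step projection need not be equidimensional; making the induction run requires a careful analysis of how the descent $s$ interacts with $\delta$ and a precise use of $\delta$-regularity to rule out the degenerate fibers, which I would isolate as a separate lemma. If that turns out to be delicate in small characteristic, an alternative is to use a $\delta$-Jordan decomposition $h = su$ to reduce (2) to the $\delta$-regular semisimple and $\delta$-regular unipotent cases inside a (possibly twisted) Levi subgroup, where the needed statements are available.
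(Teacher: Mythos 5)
Your reduction of (1) to (2) and your lower bound on the dimension of every component are both essentially correct and close to what the paper does: the paper uses the correspondence $G/B \xleftarrow{\pi} G \xrightarrow{\rho} C$, which is your incidence variety $\mathcal Z$ in disguise, and it gets the lower bound more cheaply than you do, namely from the fact that every irreducible component of the intersection of two subvarieties of the smooth variety $G$ has dimension at least $\dim C + \dim(B\dot wB) - \dim G$. Your route through the twisted Steinberg quotient $\chi\colon G \to G/\!\!/_\delta G$ also works, but it silently uses that the fiber $\chi^{-1}(\chi(h))$ has dimension $\dim G - \dim T^\d$ and contains the regular class $C$ as an open subset; in the $\delta$-twisted setting these facts are true but nontrivial and would need a reference, so this is a heavier tool than necessary.

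The genuine gap is the upper bound $\dim \CY_{w,h,\d} \le \ell(w)$, which is where the real content of the lemma sits and which your proposal does not establish. You propose a Deligne--Lusztig style induction on $\ell(w)$, but, as you yourself point out, the one-step fibration does not close up: writing $w = sv$ and passing to the intermediate flag $x'B$ with $\mathrm{pos}(xB,x'B)=s$ and $\mathrm{pos}(x'B,\,h\delta(x)B)=v$, the resulting condition involves the position of $x'B$ relative to $h\delta(x)B$ rather than to $h\delta(x')B$, so the image is not a Lusztig variety for a shorter element and the recursion has nothing to recurse on; it is also not shown how $\delta$-regularity prevents whole fibers $\mathbb P^1$ from appearing. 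You explicitly defer this to ``a separate lemma'' and offer only a sketched fallback via $\delta$-Jordan decomposition, so the argument is incomplete precisely at its load-bearing step. The paper does not reprove this bound either: it imports the equality $\dim \CY_{w,h,\d} = \ell(w)$ directly from \cite[\S 5.5]{he2023affine} and then combines it with the fibration and the intersection-dimension lower bound to get pure dimensionality. To complete your proof you would need either to actually carry out (and justify) the induction you describe, or to cite the dimension formula as the paper does.
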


\begin{remark}
    The result for regular semisimple conjugacy classes is due to Lusztig \cite[Lemma 1.1]{lusztig197912}. 
\end{remark}

\begin{proof}
Let $h \in C$. By \cite[\S5.5]{he2023affine}, we have $\dim \CY_{w, h, \d}= \ell(w)$. We have a diagram
\begin{equation}\label{eq:diagram}
    \xymatrix{G/B & G \ar[l]_-{\pi} \ar[r]^-{\rho} & C,}    
\end{equation}
where $\pi\colon G \to G/B$ is the natural projection map and $\rho\colon G \to C\colon g\mapsto g \cdot_\d h$. Note that $\rho$ is a fiber bundle with fiber isomorphic to $Z_{G, \d}(h)$. By definition, $C \cap B \dot w B=\rho(\pi^{-1}(\CY_{w, h, \d}))$, so 
\begin{equation*}
    \dim(C \cap B \dot w B)=\dim(\rho(\pi^{-1}(\CY_{w, h, \d})) = \ell(w) + \dim B - \dim T^\d.
\end{equation*}
Each irreducible component of $C \cap B \dot w B$ has dimension at least
\begin{align*}
%\dim(G \cdot_\delta x) + \dim B \dot w B - \dim G = \dim B + \ell(w) - \dim Z_{G}(x\delta),
\dim(C) + \dim(B \dot w B)- \dim G
&= 
\dim G - \dim T^\d + \dim B + \ell(w) - \dim G
\\
&=
\ell(w) + \dim B - \dim T^\d.
\end{align*}
Hence $C \cap B \dot w B$ has pure dimension $\dim B + \ell(w) - \dim T^\d$. Moreover, using the diagram \eqref{eq:diagram}, we deduce that $\CY_{w, h, \d}$ is of pure dimension $\ell(w)$.
\end{proof}

% \section{Reduction to finite field case}
% Lemma \ref{lemma:DigneMichelHeckecoefficients}, Proposition \ref{prop:irr} and Theorem \ref{theorem:lusztigvarietyirreduciblearbitrarycharacteristic} will be proved using arguments involving counting rational points. As such, we will want to assume that $\sfK = \BF_q$ is a finite field. Nevertheless, these results turn out to be true for general fields $\sfK$ of characteristic $p$ by the following general algebraic geometry fact.

% \begin{proposition}
% Suppose $\sfK$ is any field of characteristic $p$. Suppose $X$ is a scheme over $\spec(\sfK)$ of dimension $d$.
% \begin{enumerate}
%     \item 
%     \item
% \end{enumerate}   
% \end{proposition}

\section{Iwahori--Hecke algebra and counting rational points}\label{sec:Hecke algebra}

\begin{definition}
    The \emph{Iwahori--Hecke algebra $\CH$ associated with $W$} is the $\BZ[t]$-algebra with basis $\{T_w\colon w \in W\}$ subject to the relations
\begin{align*}
    &T_wT_{w'} = T_{ww'} &&\text{for $w,w'\in W$ with $\ell(ww') = \ell(w)+\ell(w') = \ell(ww')$,}
    \\
    &(T_s-t)(T_s+1) = 0 &&\text{for $s\in \BS$ (the `quadratic relations').}
\end{align*}
    For $A \in \CH$ and $w \in W$, we denote by $[A:T_{w}]$ the coefficient $T_{w}$ in $A$ with respect to the standard basis $\{T_{w'}\colon w' \in W\}$.
\end{definition}

\begin{lemma}\label{lemma:DigneMichelHeckecoefficients}
Suppose $\supp_\delta(w) = \BS$. Then
\begin{enumerate}
\item $U^- \cap B \dot w B$ is irreducible of dimension $\ell(w)$,
\item $\dim(\dot w'^{-1}U^-\delta(\dot w') \cap B \dot w B) < \ell(w)+\ell(w')$ for $w' \in W$ with $w' \neq 1$. 
\end{enumerate}
\end{lemma}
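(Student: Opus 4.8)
The two statements are essentially about the Bruhat cell $B\dot wB$ viewed, via the open cell decomposition, as a variety fibered over $U^- w$. The first plan is to realize $U^- \cap B\dot wB$ as a "big cell in type $w$": using the retraction $U^- \to U^- \cap \dot w U^+ \dot w^{-1}$ and the Bruhat decomposition, one can identify $U^- \cap B\dot wB$ with an explicit locally closed subvariety of an affine space, and then show irreducibility by exhibiting it as (an open subset of, or a bundle over) a product of one-dimensional pieces indexed by the inversions of $w$, or a suitable subset thereof. The condition $\supp_\delta(w)=\BS$ should enter only as a sanity condition ensuring the relevant element is genuinely "large"; in fact part (1) is a statement that does not even need $\delta$ and should hold for all $w$ with $\supp(w) = \BS$ — this is classical and appears in Digne--Michel's work on Deligne--Lusztig varieties, which the label of the lemma suggests. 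So for (1) I would either cite the Digne--Michel computation of the class polynomial / structure of $U^- \cap B\dot wB$, or give the short direct argument: intersect the two cell decompositions $G = \bigsqcup B\dot yB$ and $U^- = \bigsqcup (U^- \cap B \dot y B)$, use that $U^- \cap B\dot wB$ is nonempty and (by a dimension count with the open cell) of dimension exactly $\ell(w)$, and conclude irreducibility from the fact that it is the preimage of a single point under the fibration $U^- w U^+ \to U^-w$ realizing the big cell — i.e. it is isomorphic to an affine space intersected with a cell, which for the "top" stratum is irreducible.

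\textbf{Reduction of (2) to a counting statement.} For part (2), the natural route is through the Iwahori--Hecke algebra interpretation set up in this section: over a finite field $\BF_q$, the number of $\BF_q$-points of $\dot w'^{-1} U^- \delta(\dot w') \cap B\dot wB$ is a polynomial in $q$ whose leading term records the top-dimensional components, and this count can be extracted as a structure constant $[T_{?} \cdot T_{?} : T_w]$ (or a product/coproduct thereof) in $\CH$ specialized at $t = q$. Concretely, conjugating by $\dot w'$ turns the intersection with $U^-$ into an intersection of two Bruhat-type cells twisted by $\delta$, and the dimension of such an intersection is governed by the degree in $q$ of the corresponding Hecke-algebra coefficient. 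So the plan is: (i) rewrite $\#\bigl((\dot w'^{-1} U^- \delta(\dot w') \cap B\dot wB)(\BF_q)\bigr)$ as a sum over intermediate Weyl group elements of products of cell cardinalities, matching a coefficient of $T_w$ in a product like $T_{w'^{-1}} \cdot (\text{sum over }U^-\text{-cells}) \cdot \delta(T_{w'})$ in $\CH$; (ii) use the quadratic relations and the fact that $\deg_q \#(B\dot yB \cap U^-)(\BF_q) = \ell(y)$ to bound the degree; (iii) observe that the would-be top degree $\ell(w) + \ell(w')$ is attained only when $w' = 1$, because any nontrivial $w'$ forces a strict drop coming from at least one quadratic relation $(T_s - t)(T_s+1)=0$ being invoked (equivalently, from the length-additivity $\ell(w'^{-1} y \delta(w')) = \ell(w'^{-1}) + \ell(y) + \ell(w')$ failing for the $y$'s that contribute the top term). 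Then $\dim < \ell(w) + \ell(w')$ follows since a variety whose point-count polynomial has degree $< d$ has dimension $< d$ (over $\overline{\BF_p}$, and then in general by the spreading-out / reduction argument the paper invokes elsewhere).

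\textbf{The main obstacle.} The delicate point is step (iii): showing the strict inequality rather than $\le$. One has to rule out a "miraculous" cancellation-free contribution of degree $\ell(w)+\ell(w')$ coming from some $y$ with $\ell(w'^{-1}) + \ell(y) + \ell(w') = \ell(w'^{-1} y \delta(w'))$ and $\ell(y) = \ell(w)$ and $w'^{-1} y \delta(w') = w$ — i.e. to show no such $y \neq $ (the one giving $w' = 1$) exists, or that if it formally exists it does not actually contribute to the $U^-$-intersection. I expect the cleanest way is: parametrize $\dot w'^{-1}U^-\delta(\dot w')$ as a union over $v \in W$ of its intersections with $B \dot v B$ (each of dimension $\ell(v)$, and the whole thing of dimension $\ell(w'^{-1}) + \ell(w_0) + \ell(\delta(w')) - \dots$ — actually just $\dim U^- = \ell(w_0)$, sitting inside $G$), then intersect with $B\dot wB$ and bound $\dim\bigl((B\dot vB \cap \dot w'^{-1}U^-\delta(\dot w')) \cap B\dot wB\bigr)$. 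Here $B\dot vB \cap B\dot wB = \emptyset$ unless $v = w$, so only $v = w$ survives, and one is reduced to $\dim\bigl(B\dot wB \cap \dot w'^{-1}U^-\delta(\dot w')\bigr)$, which is $\le \ell(w) + \ell(w') $ always, with equality iff the twisted conjugate $\dot w'^{-1}(B\dot wB)\delta(\dot w')$ meets $U^-$ in full expected dimension — and this forces a length-additivity that, combined with $\supp_\delta(w) = \BS$ (so that $w$ "uses all the simple roots" and conjugation by a nontrivial $w'$ necessarily collides with the support), fails unless $w' = 1$. Making this last combinatorial step precise — pinning down exactly why $\supp_\delta(w)=\BS$ plus $w' \neq 1$ forbids the additivity — is where the real work lies, and I would expect to borrow the relevant length/support lemma from He's earlier papers on the $\delta$-twisted Bruhat order (e.g.\ the "minimal length in a $\delta$-conjugacy class" machinery).
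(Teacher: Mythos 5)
Your overall strategy for part (2) --- count $\BF_q$-points, identify the count with an Iwahori--Hecke algebra coefficient, and read off the dimension from the degree in $q$ --- is the paper's strategy. But the step you yourself flag as ``where the real work lies,'' namely the strict degree drop for $w'\neq 1$, is precisely the content of the lemma, and you do not prove it; the combinatorial route you sketch (a length-additivity/support argument to be borrowed from the twisted-conjugacy literature) is not what makes the proof work, and it is not clear it can be completed. The paper's mechanism is different and more indirect: after the factorization
\begin{equation}
|((\dot w')^{-1}U^-\delta(\dot w') \cap B\dot w B)^F| = [T_wT_{\delta(w')^{-1} w_0} : T_{(w')^{-1}w_0}]\, q^{\ell(w')}
\end{equation}
coming from \cite[Theorem 2.6b]{kawanaka1975unipotent}, it invokes \cite[Lemma 8.6]{digne2006endomorphisms}: the \emph{sum} $\sum_{w'\in W} [T_wT_{\delta(w')^{-1} w_0} : T_{(w')^{-1}w_0}]$ is monic of degree $\ell(w)$, and this is exactly where $\supp_\delta(w)=\BS$ enters. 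Since each summand is a point count (hence has non-negative leading coefficient) and the $w'=1$ summand $[T_wT_{w_0}:T_{w_0}]$ is already monic of degree $\ell(w)$ by the quadratic relations, every other summand must have degree $<\ell(w)$, which gives $\dim<\ell(w)+\ell(w')$ after multiplying by $q^{\ell(w')}$. Without some such global monicity-plus-positivity input your step (iii) is an unproven assertion; note that the inequality genuinely fails without the support hypothesis (for $\delta=1$, $w=1$, $w'=w_0$ one gets $\dot w_0^{-1}U^-\dot w_0\cap B=U$, of dimension $\ell(w_0)=\ell(w)+\ell(w')$), so any correct argument must use $\supp_\delta(w)=\BS$ in an essential way, which yours does not yet do.

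For part (1), your ``direct argument'' does not establish irreducibility: knowing that $U^-\cap B\dot wB$ is nonempty of dimension $\ell(w)$ says nothing about its number of components, and it is not the preimage of a point under a fibration of the big cell in any evident way. The paper obtains irreducibility from the same point-count formula specialized at $w'=1$: $|(U^-\cap B\dot wB)^F|=[T_wT_{w_0}:T_{w_0}]$ is a \emph{monic} polynomial of degree $\ell(w)$ in $q$, so the number of top-dimensional geometric components over $\bar\BF_p$ is the leading coefficient, namely $1$; the characteristic $0$ case then follows by spreading out over $\BZ$ and reducing mod $p$. You allude to the reduction argument but would need to set it up (everything $\delta$-stable and defined over $\BZ$, $q$ large, $F(\delta)=\delta$) for the point-counting to be legitimate.
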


\begin{proof}
    % For any $w, w' \in W$, we have
    % \begin{align*}
    %     (\dot w')^{-1}U^-\delta(\dot w') 
    %     &= (\dot w')^{-1}\delta(\dot w') (\delta(\dot w'^{-1}) U^- \d(\dot w')) 
    %     \\ 
    %     &\cong
    %     (\dot w')^{-1}\delta(\dot w')(\delta(\dot w'^{-1}) U^- \d(\dot w')  \cap  U^-) \times (\delta(\dot w'^{-1}) U^- \d(\dot w')  \cap U).
    % \end{align*}
    % Since $B \dot w B$ is stable under right multiplication of $\delta((\dot w')^{-1}) U^- \d(\dot w')$, we have 
    % \begin{equation}\label{eq:decompositionUBwB}
    %     (\dot w')^{-1}U^-\delta(\dot w') \cap B \dot w B \cong ((\dot w')^{-1}U^-\delta(\dot w')\cap (\dot w')^{-1}\delta(\dot w') U^- \cap B \dot w B)\times(\delta((\dot w')^{-1}) U^- \d(\dot w') \cap U).
    % \end{equation}

    Suppose first that $\charac \sfK = p > 0$. The characteristic 0 case will be deduced later from the result for positive characteristic. 
    As $\sfK$ is algebraically closed, $\BG$ has a Borel subgroup and maximal torus defined over $\BZ$. By conjugating the action of $\delta$ with an inner automorphism, we may assume that $B$, $T$, $N_G(T)$, $U$ and $U^-$ are $\delta$-stable and defined over $\BZ$. Then $B\dot wB$ is defined over $\BZ$. Moreover, $\dot w' U^-$ is defined over $\BZ$ as $\dot w' U^- \delta(\dot w') = \dot w' U^- \dot w''$ for some $w'' \in W$. Thus to prove the $\charac \sfK = p$ case, we may assume that $\sfK = \bar\BF_p$.
    
    %We first prove the lemma for the case that $\sfK = \bar\BF_p$. In this case, $\BG$ is defined over $\BF_p$.
    %Assume for the rest of the section that $\BG$ is defined over the finite field $\BF_q$. 
    In this case, we have a (geometric) Frobenius morphism $F$ on $\BG$ corresponding to $\BF_q$. By picking $q$ large enough, we may assume that $F(\d)=\d$. 
    Furthermore, we can pick an $F$-stable Borel subgroup and assume it is also $\delta$-stable by appropriately conjugating $
    \delta$ with an inner automorphism, thus we may assume that $B$, $T$, $N_G(T)$, $U$ and $U^-$ are all $F$-stable and $\delta$-stable.

    We have the following general facts (i.e. for any field $\sfK$):
    \begin{align*}
        (\dot w')^{-1}U^-\delta(\dot w') 
        &= (\dot w')^{-1}\delta(\dot w') (\delta(\dot w'^{-1}) U^- \d(\dot w')) 
        \\ 
        &\cong
        (\dot w')^{-1}\delta(\dot w')(\delta(\dot w'^{-1}) U^- \d(\dot w')  \cap  U^-) \times (\delta(\dot w'^{-1}) U^- \d(\dot w')  \cap U),
    \end{align*}
    and as $B \dot w B$ is stable under right multiplication with $\delta((\dot w')^{-1}) U^- \d(\dot w')$, we have 
    \begin{align}\label{eq:decompositionUBwB}
        &(\dot w')^{-1}U^-\delta(\dot w') \cap B \dot w B 
        \\
        \cong\,&((\dot w')^{-1}U^-\delta(\dot w')\cap (\dot w')^{-1}\delta(\dot w') U^- \cap B \dot w B)\times(\delta((\dot w')^{-1}) U^- \d(\dot w') \cap U).
        % \\
        % \cong \,
        % &V_{w,w'} \times(\delta((\dot w')^{-1}) U^- \d(\dot w') \cap U),
    \end{align}
    By \cite[Theorem 2.6b]{kawanaka1975unipotent}, we have $|((\dot w')^{-1}U^-\delta(\dot w')\cap (\dot w')^{-1}\delta(\dot w') U^- \cap B \dot w B)^F| = [T_wT_{\delta(w')^{-1}w_0} : T_{(w')^{-1}w_0}]$, and together with \eqref{eq:decompositionUBwB}, this gives 
    \begin{equation}\label{eq:kawanaka}
        |((\dot w')^{-1}U^-\delta(\dot w') \cap B\dot w B)^F| = [T_wT_{\delta(w')^{-1} w_0} : T_{(w')^{-1}w_0}] q^{\ell(w')}.
    \end{equation}
    By the quadratic relations in the Iwahori--Hecke algebra, $[T_w T_{w_0}: T_{w_0}]$ is a monic polynomial in $q$ of degree $\ell(w)$. By \cite[Lemma 8.6]{digne2006endomorphisms}, $\sum_{w'\in W} [T_wT_{\delta(w')^{-1} w_0} : T_{(w')^{-1}w_0}]$ is also a monic polynomial of degree $\ell(w)$, so $[T_wT_{\delta(w')^{-1} w_0} : T_{(w')^{-1}w_0}]$ is a polynomial in $q$ of degree less than $\ell(w)$ for all $w' \in W$ with $w' \neq 1$. 
    %After a suitable field extension, we may assume that the assumption in \S\ref{sec:rational} is satisfied. 
    % By \cite[Theorem 2.6b]{kawanaka1975unipotent} we have 
    % %By \eqref{eq:number}, we have 
    % \begin{equation*}
    %     |((\dot w')^{-1}U^-\delta(\dot w') \cap B\dot w B)^F| = [T_wT_{\delta(w')^{-1} w_0} : T_{(w')^{-1}w_0}] q^{\ell(w')},
    % \end{equation*}
    %and the case that $w'=1$ gives us that 
    So by \eqref{eq:kawanaka} for $w'=1$, it follows that $|(U^- \cap B \dot w B)^F|$ is a monic polynomial in $q$ of degree $\ell(w)$, so $U^- \cap B \dot w B$ is irreducible of dimension $\ell(w)$. Similarly, by \eqref{eq:kawanaka} for $w'\neq1$, it follows that $\dim((\dot w')^{-1}U^-\delta(\dot w') \cap B\dot w B) < \ell(w)+\ell(w')$. 
    The Lemma thus holds for the $\charac \sfK > 0$ case. 

    %    We will show that the number of components of $\dot w'^{-1}U^-\delta(\dot w')\cap B\dot w B$ is the same as in the finite field case.

    Suppose $\charac \sfK = 0$. Similar as before, 
    %since $\sfK$ is algebraically closed and by conjugating $\delta$ appropriately, 
    we may assume that $B$, $T$, $N_G(T)$, $U$, $U^-$ are $\delta$-stable and defined over $\BZ$. For $w'\in W$, the reduction of $\dot w'^{-1} U^- \delta(\dot w') \cap B\dot wB$ from $\BZ$ to $\BZ/p\BZ$ does not change the number of irreducible components, nor the dimension. Hence the result follows from the already proved $\charac \sfK > 0$ case.
    %The case $\charac \sfK = 0$ can be deduced from the positive characteristic case by the Bertini--Noether theorem \cite[Proposition 10.4.2]{fried2005field}. 
\end{proof}

\begin{proposition}\label{prop:irr}
    Let $C$ be a regular $\d$-conjugacy class of $G$ and $w \in W$ with $\text{supp}_{\d}(w)=\BS$. Then $C \cap B \dot w B$ is irreducible.
    %of dimension $\dim B+\ell(w)-\dim T^\d$. 
\end{proposition}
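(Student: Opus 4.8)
The plan is to count rational points over finite fields and combine the count with the pure‑dimensionality from Lemma~\ref{lemma:Lusztigvarietypuredimensionsemisimple}(1). Since the statement is geometric, I first reduce to $\charac\sfK=p>0$ and then to $\sfK=\bar\BF_p$ exactly as in the proof of Lemma~\ref{lemma:DigneMichelHeckecoefficients} (reduction modulo $p$ changes neither the dimension nor the number of irreducible components). Over $\bar\BF_p$ I fix a Frobenius $F$ relative to $\BF_q$ with $q\gg 0$, with $B,T,N_G(T),U,U^-$ all $F$-stable and $\delta$-stable and $F\delta=\delta F$; after replacing $\BF_q$ by a finite extension I may assume $C$ is $F$-stable and contains an $F$-fixed $\delta$-regular element $h$, chosen (within its class) so that $h$ and the relevant subgroups are in good position, in particular $Z_{G,\delta}(h)^\circ\subseteq B$. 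It then suffices to show that $|(C\cap B\dot wB)^{F^n}|$ is a monic polynomial in $q^n$ of degree $d:=\dim B+\ell(w)-\dim T^\delta$ for every $n\ge 1$: together with Lemma~\ref{lemma:Lusztigvarietypuredimensionsemisimple}(1) this forces $H^{2d}_c(C\cap B\dot wB,\bar\BQ_\ell)$ to be one‑dimensional with $F$ acting by $q^d$, hence $C\cap B\dot wB$ has a single top‑dimensional irreducible component and, being pure of dimension $d$, is irreducible.

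The heart of the argument is a stratification of $C\cap B\dot wB$ indexed by $w'\in W$. Writing $C\cap B\dot wB=\{g\cdot_\delta h : g\in G,\ g\cdot_\delta h\in B\dot wB\}$, with $g$ determined modulo $Z_{G,\delta}(h)$, the plan is to decompose the conjugating element $g$ according to the Bruhat cell it meets and to use the $\delta$-stability of the unipotent subgroups to obtain $C\cap B\dot wB=\bigsqcup_{w'\in W}X_{w'}$, where each $X_{w'}$ is, Zariski-locally, a trivial $\BA^{\dim B-\dim T^\delta}$-bundle over
$$A_{w'}:=(\dot w')^{-1}U^-\delta(\dot w')\cap(\dot w')^{-1}\delta(\dot w')U^-\cap B\dot wB,$$
the ``first factor'' occurring in \eqref{eq:decompositionUBwB}. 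Heuristically $A_{w'}$ records the essential $U^-$-direction constrained to lie in $B\dot wB$, while the affine factor records the freedom in $g$ transverse to the centralizer direction (the ``$B$-part''); the crucial input making the stratum have the expected shape is that $h$ is $\delta$-regular, which forces the orbit map $g\mapsto g\cdot_\delta h$ to meet each Bruhat stratum as transversally as possible, just as for a regular semisimple element.

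Granting the stratification, $|X_{w'}^F|=q^{\dim B-\dim T^\delta}\,|A_{w'}^F|$, and by Kawanaka's formula (used already in Lemma~\ref{lemma:DigneMichelHeckecoefficients}) $|A_{w'}^F|=[T_wT_{\delta(w')^{-1}w_0}:T_{(w')^{-1}w_0}]$. Summing over $w'$ and invoking \cite[Lemma~8.6]{digne2006endomorphisms}, which states that $\sum_{w'\in W}[T_wT_{\delta(w')^{-1}w_0}:T_{(w')^{-1}w_0}]$ is a monic polynomial in $q$ of degree $\ell(w)$, gives $|(C\cap B\dot wB)^F|=q^{\dim B-\dim T^\delta}\cdot(\text{monic of degree }\ell(w))$, a monic polynomial in $q$ of degree $d$; the same identity holds verbatim over every $\BF_{q^n}$. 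By the first paragraph this finishes the proof.

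The \emph{main obstacle} is constructing the stratification in the second paragraph for an arbitrary $\delta$-regular $h$ (possibly mixed, and in bad characteristic): when $h$ is regular semisimple the argument is transparent because $T$ normalizes $U^-$, so the map $u^-\mapsto u^-h\delta(u^-)^{-1}$ differs from a twisted-conjugation translate of $U^-$ by an automorphism of $U^-$, and the contribution of each stratum is immediate; for general $h$ one must analyze the $\delta$-twisted orbit of $h$ against the Bruhat decomposition more carefully (for instance by descending to $Z_G(s)$ for the semisimple part $s$ of $h$ and the regular unipotent element there) and verify that the fibres of $g\mapsto g\cdot_\delta h$ meet each stratum in the expected dimension, so that the affine factor of dimension $\dim B-\dim T^\delta$ really appears.
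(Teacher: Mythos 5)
There is a genuine gap, and you have in fact flagged it yourself: the entire argument rests on the claim that $C\cap B\dot wB$ decomposes as $\bigsqcup_{w'}X_{w'}$ with each $X_{w'}$ an affine bundle of rank $\dim B-\dim T^\d$ over $A_{w'}=(\dot w')^{-1}U^-\delta(\dot w')\cap(\dot w')^{-1}\delta(\dot w')U^-\cap B\dot wB$, and this claim is never established --- your last paragraph concedes that constructing it for a general $\d$-regular $h$ (mixed type, bad characteristic) is the ``main obstacle''. Everything downstream (the exact point count via Kawanaka's formula, the appeal to \cite[Lemma 8.6]{digne2006endomorphisms}, the monicity of $|(C\cap B\dot wB)^{F^n}|$) is conditional on it. There is also a rationality issue you do not address: even granting a geometric stratification by conjugating elements, $C^F$ generally splits into several $G^F$-orbits when $Z_{G,\d}(h)$ is disconnected (already for the regular unipotent class in $\SL_2$), so identifying $|X_{w'}^F|$ with $q^{\dim B-\dim T^\d}|A_{w'}^F|$ requires the strata and the bundle structure to be defined over $\BF_q$ with genuinely split affine fibres --- none of which is verified. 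The closing framework (monic point count of the correct degree plus the purity of Lemma \ref{lemma:Lusztigvarietypuredimensionsemisimple}(1) forces a single top-dimensional component) is sound, but the input to it is missing.

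It is worth noting how the paper's proof sidesteps the need for any exact stratification: it places $h$ in $B^-$ by Steinberg's theorem, writes $C=\bigcup_{w'}(B\dot w'B^-)\cdot_\d h$, and only proves \emph{inequalities}. For $w'\neq 1$ the containment $B^-\cdot_\d h\subseteq tT_1U^-$ together with Lemma \ref{lemma:DigneMichelHeckecoefficients}(2) gives a strict upper bound $\dim((B\dot w'B^-)\cdot_\d h\cap B\dot wB)<\dim B+\ell(w)-\dim T^\d$, and for $w'=1$ the stratum is merely shown to be \emph{contained in} the irreducible variety $U\cdot_\d(tT_1(U^-\cap B\dot wB))$ of the correct dimension, using Lemma \ref{lemma:DigneMichelHeckecoefficients}(1). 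Combined with equidimensionality this already forces irreducibility, with no point counting at this stage and no need to know the precise structure of any stratum. If you want to salvage your approach, the honest route is to replace the claimed equalities by these one-sided estimates; as stated, your proof is not complete.
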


\begin{proof}
    %The result about the dimension is proved in Lemma \ref{lemma:Lusztigvarietypuredimensionsemisimple}, so we prove irreducibility.
    Let $B^-=T U^-$ be the Borel subgroup opposite to $B$. Then $B^-$ is $\d$-stable. By Steinberg's theorem \cite[Lemma 7.3]{steinberg1968endomorphisms}, any $\d$-conjugacy class of $G$ contains an element of $B^-$. Let $h \in B^- \cap C$. 
    We have 
    \begin{equation}\label{eq:Cdecomposition}
        C=G\cdot_\d h=\bigcup_{w' \in W} (B \dot w' B^-) \cdot_\d h.
        %\quad\text{and}\quad
        %C \cap B \dot w B=\bigcup_{w' \in W} (B \dot w' B^-) \cdot_\d h \cap B \dot w B.    
    \end{equation}
    %By Lemma \ref{lemma:Lusztigvarietypuredimensionsemisimple}, $C \cap B \dot w B$ is of pure dimension,     
    Note that $B \dot w' B^-=(U \cap (\dot w') U (\dot w') \i) \dot w' B^-$ and that $B \dot w B$ is stable under the $\d$-conjugation action of %$B$, so it is also stable under the $\d$-conjugation action of 
    $U \cap (\dot w' U (\dot w')^{-1})$, hence
    \begin{equation*}
        (B \dot w' B^-) \cdot_\d h \cap B \dot w B
        =
        (U \cap \dot w' U (\dot w') \i) 
        \cdot_\d 
        (\dot w' (B^- \cdot_\d h) \delta(\dot w'^{-1}) \cap B \dot w B).
    \end{equation*}
    In particular, $\dim ((B \dot w' B^-) \cdot_\d h \cap B \dot w B) \le \dim (\dot w' (B^- \cdot_\d h) \d(\dot w'^{-1}) \cap B \dot w B)+\ell(w_0 w')$. 

    Let $t \in T$ and $u \in U^-$ such that $h=tu$. Then $B^- \cdot_\d h \subset t T_1 U^-$, where $T_1=\{(t' \d(t') \i\colon t' \in T\}$, which is irreducible of dimension $\dim T-\dim T^\d$, so
    \begin{equation*}
        \dot w' (B^- \cdot_\d h) \delta(\dot w'^{-1}) \cap B \dot w B 
        \subseteq 
        t T_1 (\dot w' U^- \d(\dot w'^{-1}) \cap B \dot w B)
        \cong
        \dot w' U^- \d(\dot w'^{-1}) \cap B \dot w B.
    \end{equation*} 
    If $w' \neq 1$, then by Lemma \ref{lemma:DigneMichelHeckecoefficients}(2), we have
    \begin{align}
        &\dim ((B \dot w' B^-) \cdot_\d h \cap B \dot w B) 
        \\
        \le& \dim (\dot w' (B^- \cdot_\d h) \d(\dot w'^{-1}) \cap B \dot w B)+\ell(w_0 w') 
        \\ 
        \le& \dim(\dot w' U^- \d(\dot w'^{-1}) \cap B \dot w B)+\dim T-\dim T^\d+\ell(w_0 w') 
        \\ 
        <&\ell(w)+\ell(w_0)+\dim T-\dim T^\d 
        \\ 
        =&\dim B+\ell(w)-\dim T^\d. \label{eq:dimBw'B^-}
    \end{align}
    Since $C \cap B \dot w B=\bigcup_{w' \in W} (B \dot w' B^-) \cdot_\d h \cap B \dot w B$, it follows from Lemma \ref{lemma:Lusztigvarietypuredimensionsemisimple}(1) that 
    \begin{equation}\label{eq:dimBBhBwB}
        \dim((B B^-) \cdot_\d h \cap B \dot w B) = \dim B+\ell(w)-\dim T^\d.
    \end{equation}
        We also have 
    \begin{equation*}
        (B B^-) \cdot_{\delta} h \cap B \dot w B
        =
        U \cdot_{\delta} (B^- \cdot_{\delta} h \cap B \dot w B) 
        \subseteq 
        % U \cdot_{\delta} (t T_1 U^- \cap B \dot w B)
        % =
        U \cdot_{\delta} (t T_1 (U^- \cap B \dot w B)),
    \end{equation*} 
    so $\dim(U \cdot_\d (t T_1 (U^- \cap B \dot w B))) \geq \dim((B B^-) \cdot_\d h \cap B \dot w B) = \dim B+\ell(w)-\dim T^\d$.    
    By Lemma \ref{lemma:DigneMichelHeckecoefficients}(1), $U^- \cap B \dot w B$ is irreducible of dimension $\ell(w)$, so $U \cdot_\d (t T_1 (U^- \cap B \dot w B))$ is irreducible of dimension $\dim U+\dim T_1+\ell(w)=\dim B+\ell(w)-\dim T^\d$.
    %, hence equal to $\dim U+\dim T_1+\ell(w)=\dim B+\ell(w)-\dim T^\d$.
    %
    %By Lemma \ref{lemma:Lusztigvarietypuredimensionsemisimple}, $C \cap B \dot w B$ is of pure dimension $\dim B+\ell(w)-\dim T^\d$. Therefore, if $w' \neq 1$, then $(B \dot w' B^-) \cdot_\d h \cap B \dot w B$ does not contain any irreducible components of $C \cap B \dot w B$. So $(B B^-) \cdot_\d h \cap B \dot w B$ is of dimension $\dim B+\ell(w)-\dim T^\d$. 
    Thus $(B B^-) \cdot_\d h \cap B \dot w B$ is contained in an irreducible variety of the same dimension, and is therefore itself irreducible. 
    Together with \eqref{eq:dimBw'B^-}, \eqref{eq:dimBBhBwB}, and the equidimensionality of $C \cap B \dot w B$, it follows that $C \cap B \dot w B$ is irreducible.
\end{proof}

\section{Main theorem}\label{sec:rational}
% Assume that $\BG$ is defined and split over $\BF_q$, and as before, 
% % Let $F$ be the (geometric) Frobenius morphism on $G$. We have $F(B)=B$, $F(T)=T$ and the induced action of $F$ on $W$ is the identity map. After a suitable finite field extension, we may 
% assume that $F(\d)=\d$, $F(B) = B$ and $F(T) = T$. As $\BG$ is split over $\BF_q$, the induced action of $F$ on $W$ is the identity map.

\begin{lemma}\label{lemma:lusztigvarietyrationalpointskawanaka}
%Assume tha $\BG$ is defined and split over $\BF_q$, and as before, 
Let $p$ be a prime number.
Suppose $\sfK = \bar\BF_p$, $\BG$ is defined over $\BF_q$ where $q$ is some power of $p$, and let $F$ be the corresponding Frobenius morphism. Suppose $F$ acts trivially on $\BG$, $F(\d)=\d$, and $B$ and $T$ are $F$-stable and $\delta$-stable.
Let $h \in G^F$ and $w \in W$. Then
$$
|\CY_{w, h, 
\d}^F|
=
|G^F||B^F|^{-1}|G^F\cdot_\delta h \cap (B \dot w B)^F||G^F\cdot_\delta h|^{-1}.
$$
\end{lemma}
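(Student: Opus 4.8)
The strategy is to compute $|\CY_{w,h,\d}^F|$ by comparing $\BF_q$-point counts along the two projections from $G$ in the diagram \eqref{eq:diagram}. Write $C = G\cdot_\d h$ for the geometric $\d$-conjugacy class of $h$, let $\pi\colon G\to G/B$ be the quotient, and let $\rho\colon G\to C$, $x\mapsto x\cdot_\d h = xh\d(x)\i$, be the orbit map. Since $F$ acts trivially on $W$ and $B,T$ are $F$-stable, the Bruhat cell $B\dot wB$ is $F$-stable and $\CY_{w,h,\d}$ is an $F$-stable subvariety of $G/B$. The first step is to record the identity
$$
\pi^{-1}(\CY_{w,h,\d}) = \{g\in G\colon g\i h\d(g)\in B\dot wB\} = \bigl(\rho^{-1}(C\cap B\dot wB)\bigr)\i ,
$$
where the last equality holds because $g\i h\d(g) = \rho(g\i)$ and every value of $\rho$ lies in $C$. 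As inversion is defined over $\BF_q$, this gives $|\pi^{-1}(\CY_{w,h,\d})^F| = |\rho^{-1}(C\cap B\dot wB)^F|$, and it remains to compute each of the two sides.

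For the first side: the fibers of $\pi$ are $B$-torsors, so for $x\in(G/B)^F$ the fiber $\pi^{-1}(x)$ is an $F$-stable $B$-torsor, and since $B$ is connected, Lang--Steinberg produces an $\BF_q$-point in it, whence $|\pi^{-1}(x)^F| = |B^F|$. The same remark applied to $x\in\CY_{w,h,\d}^F$ shows that $\pi$ restricts to a surjection from $\pi^{-1}(\CY_{w,h,\d})^F$ onto $\CY_{w,h,\d}^F$ with all fibers of size $|B^F|$, so
$$
|\pi^{-1}(\CY_{w,h,\d})^F| = |B^F|\cdot|\CY_{w,h,\d}^F| .
$$

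For the second side I would address the one genuine subtlety, namely that the stabilizer $Z_{G,\d}(h)$ need not be connected, so Lang--Steinberg does not apply fiberwise to $\rho$. This turns out to be harmless: for $c\in C^F$ the fiber $\rho^{-1}(c)$ is an $F$-stable $Z_{G,\d}(h)$-torsor, hence has either $0$ or exactly $|Z_{G,\d}(h)^F|$ points over $\BF_q$, with the latter occurring precisely when $\rho^{-1}(c)^F\neq\emptyset$, i.e. precisely when $c\in\rho(G^F) = G^F\cdot_\d h$. Writing $S = C\cap B\dot wB$, this yields
$$
|\rho^{-1}(S)^F| = |Z_{G,\d}(h)^F|\cdot\bigl|S^F\cap(G^F\cdot_\d h)\bigr| = |Z_{G,\d}(h)^F|\cdot\bigl|G^F\cdot_\d h\cap(B\dot wB)^F\bigr| .
$$
Finally, orbit--stabilizer for the finite group $G^F$ acting by $\d$-conjugation, with the stabilizer of $h$ equal to $Z_{G,\d}(h)^F$, gives $|Z_{G,\d}(h)^F| = |G^F|\,|G^F\cdot_\d h|\i$. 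Combining the three displayed equalities and rearranging yields the asserted formula. The only step requiring real care is the fiberwise count for $\rho$ just described — the possible disconnectedness of $Z_{G,\d}(h)$ is exactly why the right-hand side of the formula features the finite orbit $G^F\cdot_\d h$ rather than $C^F$; everything else is routine bookkeeping with Lang--Steinberg and orbit--stabilizer.
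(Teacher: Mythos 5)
Your proof is correct, but it runs through a different bookkeeping device than the paper's. The paper counts the finite incidence set $A_w = \{(gB,g')\in G^F/B^F\times(G^F\cdot_\delta h)\colon g^{-1}g'\delta(g)\in B\dot wB\}$ in two ways: the fibers of the projection to $G^F/B^F$ are identified with $G^F\cdot_\delta h\cap(B\dot wB)^F$ by the translation $g'\mapsto g^{-1}g'\delta(g)$, and the fibers of the projection to the finite orbit $G^F\cdot_\delta h$ are the translates $g\,\CY_{w,h,\delta}^F$. Because the second factor of $A_w$ is the finite orbit from the outset, the paper never has to count points in fibers of the orbit map $\rho\colon G\to C$, so the possible disconnectedness of $Z_{G,\delta}(h)$ never enters; the only input is $(G/B)^F=G^F/B^F$ from connectedness of $B$. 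You instead count $F$-points of the $F$-stable subvariety $\pi^{-1}(\CY_{w,h,\delta})=\bigl(\rho^{-1}(C\cap B\dot wB)\bigr)^{-1}$ of $G$ along the two fibrations, which forces you to handle the disconnected stabilizer directly; your resolution — an $F$-stable $Z_{G,\delta}(h)$-torsor has either $0$ or exactly $|Z_{G,\delta}(h)^F|$ rational points, with nonemptiness characterized by membership in $\rho(G^F)=G^F\cdot_\delta h$, combined with orbit--stabilizer for $G^F$ — is correct and is precisely what makes the finite orbit $G^F\cdot_\delta h$ (rather than $C^F$) appear in the formula. The trade-off: your route makes the role of disconnectedness explicit and conceptually transparent, while the paper's incidence-set count is slightly shorter because the translation trick absorbs that issue silently. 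Both arguments are complete.
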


\begin{remark}
    The case where $\d$ is the identity map is established by Kawanaka in \cite[Lemma 3.6]{kawanaka1975unipotent}. The general case can be proved the same way. We include the proof for the convenience of the readers. 
\end{remark}

\begin{proof}
Since $B$ is connected, we have $(G/B)^F = G^F/B^F$. Let 
\begin{align*}
A_w 
&= \{(gB,g')\in G^F/B^F \times (G^F\cdot_\delta h) \colon gB \in \CY_{w, g', \d}\}
\\
&= \{(gB,g')\in G^F/B^F \times (G^F\cdot_\delta h) \colon g^{-1} g'\delta(g)\in B \dot w B\}.
\end{align*}
We have a natural projection map from $A_w$ to $G^F/B^F$ which is surjective and whose fibers are in bijection with $G^F \cdot_\d h \cap B \dot w B=G^F \cdot_\d h \cap (B \dot w B)^F$, so
\begin{equation}
    \label{eq:Aw1}
    |A_w| 
    %= |G^F||B^F|^{-1}|(G\cdot_\delta h \cap B \dot w B)^F|
    = |G^F||B^F|^{-1}||G^F\cdot_\delta h \cap (B \dot w B)^F|.
\end{equation}
%where we use that $F$ acts trivially on $W$ to obtain the second equality.
For $g \in G^F$, we have $\CY_{w, g \cdot_\d h, \d}=g \CY_{w, h, \d}$, hence $\CY_{w, g \cdot_\d h, \d}^F=g \CY_{w, h, \d}^F$. We also have a natural projection map from $A_w$ to $G^F \cdot_\d h$. This map is surjective and has fibers isomorphic to $\CY_{w, h, \d}^F$, so we have 
\begin{equation}
    \label{eq:Aw2}
    |A_w| = |G^F\cdot_\delta h||\CY_{w, h, \d}^F|.
\end{equation} 
The Lemma follows from \eqref{eq:Aw1} and \eqref{eq:Aw2}.
\end{proof}

\begin{theorem}\label{theorem:lusztigvarietyirreduciblearbitrarycharacteristic}
    %Suppose $\sfK$ is the closure of a finite field or any algebraically closed field of characteristic $0$.
    Suppose $\supp_\delta(w) =\BS$ and let $h$ be a $\d$-regular element in $G$. Then $\CY_{w, h, \d}$ is irreducible.
\end{theorem}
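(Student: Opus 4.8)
The plan is to reduce the statement to Proposition \ref{prop:irr}, which already establishes irreducibility of $C \cap B \dot w B$ for a regular $\delta$-conjugacy class $C$. Since $\CY_{w,h,\delta}$ depends only on the $\delta$-conjugacy class of $h$, we set $C = G \cdot_\delta h$; this is a regular class by hypothesis on $h$. The task is to transfer irreducibility from $C \cap B \dot w B$ to $\CY_{w,h,\delta}$ using the geometry of the diagram \eqref{eq:diagram}, exactly as in the proof of Lemma \ref{lemma:Lusztigvarietypuredimensionsemisimple}.

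First I would recall the diagram $G/B \xleftarrow{\pi} G \xrightarrow{\rho} C$, where $\pi$ is the projection and $\rho(g) = g \cdot_\delta h$. The map $\rho$ is surjective with all fibers isomorphic to $Z_{G,\delta}(h)$, which is irreducible (being a torus-dimensional group; more precisely its identity component is what matters, but one can also argue via \eqref{eq:diagram} that $\rho$ is a locally trivial fibration in the relevant topology with irreducible fiber). By definition, $\pi^{-1}(\CY_{w,h,\delta}) = \rho^{-1}(C \cap B \dot w B)$: indeed $g \in \pi^{-1}(\CY_{w,h,\delta})$ iff $g^{-1} h \delta(g) \in B \dot w B$ iff $\rho(g) = g \cdot_\delta h$, wait — one must be slightly careful with left/right translates. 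The correct statement, used already in the cited lemma, is $C \cap B \dot w B = \rho(\pi^{-1}(\CY_{w,h,\delta}))$ and moreover $\pi^{-1}(\CY_{w,h,\delta})$ is a union of $\rho$-fibers. Hence $\pi^{-1}(\CY_{w,h,\delta}) = \rho^{-1}(C \cap B \dot w B)$.

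Now the argument runs as follows. Since $C \cap B \dot w B$ is irreducible by Proposition \ref{prop:irr}, and $\rho$ is a fiber bundle with irreducible fiber $Z_{G,\delta}(h)$, the preimage $\rho^{-1}(C \cap B \dot w B) = \pi^{-1}(\CY_{w,h,\delta})$ is irreducible (preimage of an irreducible set under a flat surjective morphism with irreducible fibers is irreducible). Finally, $\pi$ restricted to $\pi^{-1}(\CY_{w,h,\delta})$ is a surjection onto $\CY_{w,h,\delta}$ (it is a $B$-bundle, $B$ being connected hence irreducible), so $\CY_{w,h,\delta} = \pi(\pi^{-1}(\CY_{w,h,\delta}))$ is the image of an irreducible set and therefore irreducible. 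This completes the proof.

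The main obstacle, such as it is, is purely bookkeeping: verifying that $\rho$ genuinely has irreducible fibers. The fiber $Z_{G,\delta}(h)$ need not be connected in general, but since $h$ is $\delta$-regular we know $\dim Z_{G,\delta}(h) = \dim T^\delta$ and in fact for regular elements the centralizer's identity component is the key object; one can sidestep this by noting that irreducibility only requires the morphism $\rho$ to have irreducible fibers after base change, or by working with $Z_{G,\delta}(h)^\circ$-orbits and observing these sweep out the same irreducible set. Alternatively — and this is cleanest — one avoids the fiber-connectedness issue entirely by using a point-counting argument in the spirit of Lemma \ref{lemma:lusztigvarietyrationalpointskawanaka}: reduce to $\sfK = \bar\BF_p$ as in the proof of Lemma \ref{lemma:DigneMichelHeckecoefficients}, then Lemma \ref{lemma:lusztigvarietyrationalpointskawanaka} expresses $|\CY_{w,h,\delta}^F|$ as $|G^F||B^F|^{-1}|C \cap (B\dot wB)^F||G^F \cdot_\delta h|^{-1}$; since $C \cap B\dot wB$ is irreducible of known dimension with a monic-in-$q$ leading term for its point count (by Proposition \ref{prop:irr} and its proof), and the other factors have computable leading terms, one deduces that $|\CY_{w,h,\delta}^F|$ is monic in $q$ of degree $\ell(w) = \dim \CY_{w,h,\delta}$ (using Lemma \ref{lemma:Lusztigvarietypuredimensionsemisimple}(2) for the dimension and pure-dimensionality), which forces irreducibility. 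Either route works; I would present the fiber-bundle argument as primary and fall back on the counting argument if the connectedness check proves delicate.
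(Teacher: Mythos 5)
Your primary route has a genuine gap, and it is exactly the one you flag and then wave away: the fibers of $\rho\colon G\to C$ are isomorphic to $Z_{G,\delta}(h)$, which for a $\delta$-regular $h$ is in general \emph{disconnected} (already for regular semisimple elements of adjoint groups). The preimage of an irreducible set under a fibration with disconnected fibers need not be irreducible: $\rho$ factors through the finite \'etale cover $G/Z_{G,\delta}(h)^{\circ}\to C$, so $\rho^{-1}(C\cap B\dot wB)=\pi^{-1}(\CY_{w,h,\delta})$ could a priori have as many top-dimensional components as the component group $Z_{G,\delta}(h)/Z_{G,\delta}(h)^{\circ}$ has elements. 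Working with $Z^{\circ}$-orbits does not ``sweep out the same irreducible set''; it is precisely where irreducibility could be lost. So Proposition \ref{prop:irr} together with the diagram \eqref{eq:diagram} does not by itself yield the theorem --- controlling this component group is the actual content of the remaining step.

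Your fallback is the right idea and is essentially the paper's argument, but your sketch of it elides the same difficulty. Lemma \ref{lemma:lusztigvarietyrationalpointskawanaka} involves $|G^{F}\cdot_{\delta}h\cap(B\dot wB)^{F}|$, the intersection with a \emph{single} $G^{F}$-orbit, whereas Proposition \ref{prop:irr} and Lemma \ref{lemma:Lusztigvarietypuredimensionsemisimple} only give you that the point count of the full $C^{F}\cap B\dot wB$ is monic; when $Z_{G,\delta}(h)$ is disconnected, $C^{F}$ splits into several $G^{F}$-orbits, and neither $|G^{F}\cdot_{\delta}h\cap(B\dot wB)^{F}|$ nor $|G^{F}\cdot_{\delta}h|$ is monic of the expected degree on its own. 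The paper resolves this by summing Kawanaka's identity over representatives $h_{1},\dots,h_{k}$ of the orbits, setting $a_{i}$, $b_{i}$ to be the leading coefficients of $|G^{F}\cdot_{\delta}h_{i}|$ and $|\CY_{w,h_{i},\delta}^{F}|$, and deducing $\sum_{i}a_{i}b_{i}=1=\sum_{i}a_{i}$ with $a_{i}>0$ and $b_{i}\in\BN$, which forces every $b_{i}=1$. You would also need to justify the reduction to $\bar\BF_{p}$ more carefully: the $\BZ$-model argument of Lemma \ref{lemma:DigneMichelHeckecoefficients} does not apply to $\CY_{w,h,\delta}$ because $h$ need not be defined over $\BZ$ or over a finite field; the paper instead spreads $h$ out over a finitely generated ring and uses constructibility of $R^{2\ell(w)}\pi_{!}\underline{\BQ}_{\ell}$, together with Bertini--Noether for the characteristic-$0$ case.
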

\begin{proof}
    Suppose first that $\sfK = \bar \BF_p$ for a prime number $p$. We will later show how the general case reduces to this case. Suppose $\BG$ is defined over $\BF_q$ for some power $q$ of $p$ and let $F$ be the corresponding Frobenius morphism.
    By picking $q$ sufficiently large, we may assume that $h \in G^F$  and that $\BG$ is split over $\BF_q$, so that $F$ acts trivially on $W$. As in the proof of Lemma \ref{lemma:DigneMichelHeckecoefficients}, we may also assume that
    \begin{equation}\label{eq:assumptions}
        F(\d)=\d, F(B)=\delta(B)=B, F(T)=\delta(T)=T.
    \end{equation}
    %By Lemma \ref{lemma:lusztigvarietyrationalpointskawanaka}, we have
    % \begin{align}\label{eq:YBGh}
    %     |\CY_{w, h, \d}^F| |B^F| |G^F \cdot_\d h|
    %     =
    %     |G^F|
    %     |G^F\cdot_\delta h \cap B \dot w B|.
    % \end{align}
    Let $C=G \cdot_\d h$. As $Z_{G,\delta}(h)$ may be disconnected, there may be more than one $G^F$-orbit in $C^F$. By \cite[Proposition 4.2.14]{digne2020representations}, the $G^F$-orbits on $C^F \cong (G/Z_{G,\delta}(h))^F$ are in natural bijection with the $F$-conjugacy classes on $Z_{G,\delta}(h)/Z_{G,\delta}(h)^\circ$. Let $h_1, \ldots, h_k$ be the representatives of the $G^F$-orbits in $C^F$. Then 
    \begin{align}\label{eq:sumCBwB}
        \sum_{i=1}^k |G^F \cdot_\d h_i \cap B \dot w B|=|C^F \cap B \dot w B| = |(C \cap B \dot w B)^F|.   
    \end{align}
    By Lemma \ref{lemma:Lusztigvarietypuredimensionsemisimple}(1) and Proposition \ref{prop:irr}, $|C^F \cap B \dot w B| $ is a monic polynomial over $q$ of degree $\dim B+\ell(w)-\dim T^\d$. Thus $|G^F| |C^F \cap B \dot w B|$ is a monic polynomial over $q$ of degree $\dim B+\dim G+\ell(w)-\dim T^\d=\dim B+\dim C+\ell(w)$.

    Let $i \in \{1,\dots,k\}$. Note that $G^F \cdot_\d h_i \cong G^F/Z_{G,\delta}(h_i)^F$ and $Z_{G,\delta}(h_i) \cong Z_{G,\delta}(h)$ is a subgroup of $G$ of dimension $\dim G-\dim C$, so $|G^F \cdot_\d h_i|$ is a polynomial in $q$ of degree $\dim C$ with rational coefficients. By Lemma \ref{lemma:Lusztigvarietypuredimensionsemisimple}, $|\CY_{w, h_i, \d}^F|$ is a polynomial in $q$ of degree $\ell(w)$ with integer coefficients. Let $a_i \in \BQ_{>0}$ and $b_i \in \BN$ be the leading coefficient of $|G^F \cdot_\d h_i|$ and $|\CY_{w, h_i, \d}^F|$, respectively. Since $C$ is a single $G$-orbit, it is irreducible. Hence $\sum_i a_i=1$, as $\sum_i |G^F \cdot_\d h_i|=|C^F|$.
    Note that $|\CY_{w, h_i, \d}^F| |B^F| |G^F \cdot_\d h_i|$ is a polynomial over $q$ %of degree $\dim B+\dim C+\ell(w)$ 
    with leading coefficient $a_i b_i$.
    By Lemma \ref{lemma:lusztigvarietyrationalpointskawanaka}, we have
    \begin{align}\label{eq:YBGh}
        |\CY_{w, h_i, \d}^F| |B^F| |G^F \cdot_\d h_i|
        =
        |G^F|
        |G^F\cdot_\delta h_i \cap B \dot w B|.
    \end{align}
    In particular, we have $\sum_{j=1}^k |\CY_{w, h_j, \d}^F| |B^F| |G^F \cdot_\d h_j| = |G^F||C^F\cap B\dot w B|$, which is a monic polynomial over $q$, as we deduced earlier that $|C^F\cap B\dot w B|$ is monic. Hence we have $\sum_{j=1}^k a_j b_j=1$. Since $b_j \in \BN$ and $a_j>0$ for $j=1,\dots,k$, we have $1=\sum_{j=1}^k a_j b_j \geq \sum_{j=1}^k a_j=1$. Therefore $b_i =1$, so $\CY_{w, h_i, \d} \cong \CY_{w,h,\d}$ is irreducible. 
    %Thus we proved that $\CY_{w,h,\d}$ is irreducible. %We noted in the introduction that the isomorphism class of the Lusztig variety does not depend on the choice $\delta$-stable Borel subgroup $B$, so $\CY_{w,h,\d}$ is also irreducible without the assumptions in \eqref{eq:assumptions}.

    Suppose $\sfK$ is any algebraically closed field of characteristic $p$.
    The reduction to the $\bar\BF_p$ case follows the same reasoning as in \cite[Appendix B]{tsai2020components}. There exists a subfield $\sfk$ of $\sfK$ that is finitely generated over its prime field such that $h \in G(\sfk)$. %Then its algebraic closure $\bar F'$ is finitely generated over $\bar\BF_p$.
    %Thus we may assume that $\sfK = \bar \sfk$. %, so that $\sfK$ is finitely generated over $\bar\BF_p$. 
    Let $R$ be a subring of $\sfk$ of finite type over $\BF_p$ such that $\sfk$ is its quotient field and $h \in \BG(R)$. %Furthermore, we pick $R$ large enough so that it contains $\BF_q$. 
    Note that $\Spec(R)$ has generic point $\Spec(\sfk)$.
    We may extend $R$ (i.e. invert a finite number of elements in $R$) so that $Y_{w,h,\delta}$ is defined over $\Spec(R)$.
    Let $\pi \colon \CY_{w,h,\delta} \to \Spec(R)$ be the natural map. Let $\ell \neq p$ be a prime and let $\underline{\BQ}_\ell$ be the constant sheaf associated to $\BQ_\ell$. Since $\Spec(R) \to \BF_q$ is of finite type it follows that $R^{2\ell(w)}\pi_!\underline{\BQ}_\ell$ (here $\pi_!$ denotes the proper pushforward) is a constructible sheaf over $\Spec(R)$. 
    Since the theorem is proved for the case $\sfK = \bar\BF_p$, it follows that the stalks of $R^{2\ell(w)}\pi_!\underline{\BQ}_\ell$ at the closed points of $\Spec(R)$ are $1$-dimensional. By the definition of constructibility, and since closed points are Zariski dense, the stalks at all points must be of dimension $1$, particularly at the generic point $\Spec(\sfk)$. By proper base change, the top $\ell$-adic \'{E}tale cohomology with compact support $H_c^{2\ell(w)}(Y_{w,h,\delta},\BQ_\ell)$ of $Y_{w,h,\delta}$ has dimension one. A general fact is that this top cohomology has a basis indexed by the irreducible components of $Y_{w,h,\delta}$, so $Y_{w,h,\delta}$ is irreducible.

    Suppose $\charac\sfK=0$. We may assume as in the proof of Lemma \ref{lemma:DigneMichelHeckecoefficients} that $G$, $B$, $T$ and $N_G(T)$ are defined over $\BZ$ and $\delta$-stable. Then $G/B$ is defined over $\BZ$, and since $B\dot w B$ is independent of the representative of $\dot w$ in $N_G(T)$, it is also defined over $\BZ$. There exists a finitely generated ring extension $S$ of $\BZ$ such that $h \in \BG(S)$. 
    Recall $\pi$ and $\rho$ from \eqref{eq:diagram}.
    Then $Y_{w,h,\delta} = \pi\circ\rho^{-1}(B\dot w B)$ is defined over $S$.
    For a prime number $p$, denote by $\CY_{w,h,\delta}^p$ the reduction of $Y_{w,h,\delta}$ to $S/pS$. 
    By the Bertini--Noether theorem \cite[Proposition 10.4.2]{fried2005field}, it holds for a sufficiently large prime $p$ that $\CY_{w,h,\delta}^p$ has the same number of irreducible components as $\CY_{w,h,\delta}$. We already proved that $\CY_{w,h,\delta}^p \times_{\Spec(S/pS)} \Spec{(\overline{S/pS})}$ is irreducible, so the result follows.
\end{proof}

\bibliographystyle{alpha}
\bibliography{main}

\end{document}